\theoremstyle{definition}
\newtheorem{theorem}{Theorem}[section]
\newtheorem{lemma}[theorem]{Lemma}
\numberwithin{equation}{section}
\newcommand{\abs}[1]{\left\lvert#1\right\rvert}
\newcommand{\norm}[1]{\left\|#1\right\|}
\newcommand{\R}{\mathbb R}
\renewcommand{\epsilon}{\varepsilon}
\newcommand{\dx}{\, \mathrm d}
\renewcommand{\phi}{\varphi}
\begin{document}
\allowdisplaybreaks

\title[A trajectorial interpretation of Moser's proof of the Harnack inequality]{A trajectorial interpretation of Moser's proof of the Harnack inequality}

\date{\today}
\author{Lukas Niebel$^*$}
\email[Lukas Niebel (corresponding author)]{lukas.niebel@uni-ulm.de}
\urladdr{https://lukasniebel.github.io}
\author{Rico Zacher}
\email[Rico Zacher]{rico.zacher@uni-ulm.de}
\urladdr{https://www.uni-ulm.de/mawi/iaa/members/zacher}

\address[Lukas Niebel, Rico Zacher]{Institut f\"ur Angewandte Analysis, Universit\"at Ulm, Helmholtzstra\ss{}e 18, 89081 Ulm, Germany.}
\thanks{\emph{Declarations of interests:} none.}

\maketitle
\begin{abstract}
{In 1971 Moser published a simplified version of his proof of the parabolic Harnack inequality. The core new ingredient is a fundamental lemma due to Bombieri and Giusti, which combines an $L^p-L^\infty$-estimate with a weak $L^1$-estimate for the logarithm of supersolutions. In this note, we give a novel proof of this weak $L^1$-estimate. The presented argument uses \textit{parabolic trajectories} and does not use any Poincar\'e inequality. Moreover, the proposed argument gives a geometric interpretation of Moser's result and could allow transferring Moser's method to other equations.}
\end{abstract}

\vspace{1em}
{\centering \textbf{Mathematics Subject Classification.} 35K10, 35B45, 35B65, 35J15. \par}
\vspace{1em}
\textbf{Keywords.} Harnack inequality, parabolic equations, parabolic trajectories, regularity theory, elliptic equations.

\section{Moser's proof of the parabolic Harnack inequality}

 For $T>0$ and $\Omega \subset \R^d$ open we set $\Omega_T := (0,T) \times \Omega$. Let $A = A(t,x) \in L^\infty(\Omega_T;\R^{d \times d})$ be such that
\begin{enumerate}
	\item[(H1)] $\lambda \abs{\xi}^2 \le \langle A(t,x) \xi, \xi \rangle$ for all $\xi \in \R^d$ and almost all $(t,x) \in \Omega_T$,
	\item[(H2)] $\sum\limits_{i,j = 1}^d \abs{a_{ij}(t,x)}^2 \le \Lambda^2$ for almost all $(t,x) \in \Omega_T$,
\end{enumerate}
for some constants $0<\lambda <\Lambda$.
We are interested in solutions to the parabolic equation 
\begin{equation} \label{eq:par}
	\partial_t u = \nabla \cdot (A(t,x) \nabla u).
\end{equation}

We say $u \in C([0,T];L^2(\Omega)) \cap L^2((0,T);H^1(\Omega))$ is a weak (super-, sub-) solution to the parabolic equation \eqref{eq:par} in $\Omega_T$ if equality ($\ge $,$\le $) in \eqref{eq:par} holds in the distributional sense.  We set $\mu := \frac{1}{\lambda}+\Lambda$ if $A$ is symmetric and $\mu := (\frac{1}{\lambda}+\Lambda)^2$ otherwise. 

Next, we introduce parabolic cylinders. For $r>0$, $t_0 \in \R$ and $x_0 \in \R^d$, we define 
\begin{align*}
	Q_r^-(t_0,x_0) &= (t_0-r^2,t_0] \times B_r(x_0), \\
	Q_r^+(t_0,x_0) &= [t_0,t_0+r^2) \times B_r(x_0), \\
	Q_r(t_0,x_0) &= (t_0-r^2,t_0+r^2) \times B_r(x_0).
\end{align*}

With this notation at hand, the Harnack estimate can be stated as follows.
\begin{theorem} \label{thm:harnack}
	Let $\delta \in (0,1) $, $\tau >0$. There exists $C = C(d,\delta,\tau)>0$ such that for any cylinder $\tilde{Q} = (t_0-\delta \tau r^2,t_0+2 \tau r^2) \times B_{ r}(x_0) \subset \Omega_T$ with $r>0$ and any nonnegative weak solution $u$ of equation \eqref{eq:par} in $\tilde{Q}$ satisfies
	\begin{equation*}
		\sup_{Q_-} u \le C^\mu \inf_{Q_+} u
	\end{equation*}
	where $Q_- = (t_0,t_0+\delta \tau r^2) \times B_{\delta r}(x_0)$ and $Q_+ = (t_0+(2-\delta) \tau r^2,t_0+2\tau r^2) \times B_{\delta r}(x_0)$.
\end{theorem}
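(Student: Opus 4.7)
My plan is to follow Moser's scheme for the parabolic Harnack inequality: combine an $L^p$-to-$L^\infty$ estimate for both positive powers of subsolutions and negative powers of positive supersolutions with a weak $L^1$-estimate for the logarithm of a positive supersolution, and then glue the two via the Bombieri--Giusti lemma. The abstract indicates that the key new ingredient is a trajectorial proof of the log-estimate; the two other blocks are essentially standard Moser theory.

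First, I would use the classical Moser iteration. Testing equation \eqref{eq:par} against $u^{p-1}\eta^2$ with $\eta$ a suitable space-time cutoff yields, after exploiting (H1)--(H2) and the parabolic Sobolev embedding, Caccioppoli-type bounds that iterate to
\begin{equation*}
	\sup_{Q_{\rho'}^-(t_0,x_0)} u^p \le \frac{c(d)^\mu}{(\rho-\rho')^{d+2}} \int_{Q_\rho^-(t_0,x_0)} u^p \dx s \dx y,
\end{equation*}
for any $p \in (0,\infty)$ and a nonnegative subsolution $u$, and the symmetric version for $u^{-p}$ on forward cylinders when $u$ is a positive supersolution. The exponent $\mu$ enters through the ellipticity ratio in the Caccioppoli step, which is the reason the final constant is $C^\mu$.

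Second, and this is the heart of the argument, I would establish the weak $L^1$-estimate: for a positive supersolution $u$ on $\tilde Q$ there is a constant $a \in \R$ with
\begin{equation*}
	\abs{\{(t,x) \in Q_- : \log u < a - s\}} + \abs{\{(t,x) \in Q_+ : \log u > a + s\}} \le \frac{c(d,\delta,\tau)^\mu |\tilde Q|}{s}
\end{equation*}
for all $s>0$. Following the approach advertised in the abstract, I would avoid the usual Poincar\'e inequality and instead transfer information between space-time points along \emph{parabolic trajectories}: families of curves $s\mapsto (s,\gamma(s))$ joining points of $Q_-$ to points of $Q_+$, along which the energy identity for $v=\log u$ (obtained by testing \eqref{eq:par} against $u^{-1}\eta^2$) controls the variation of $v$ in terms of $\int |\nabla v|^2$. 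The shift $a$ is then chosen as a median-type level of $v$, and the weak $L^1$-bound follows from a Chebyshev-type counting argument on the family of trajectories.

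Third, I would combine these estimates via the Bombieri--Giusti lemma. Applied on $Q_-$ to $e^{-a} u$ (a nonnegative subsolution up to rescaling) and on $Q_+$ to $e^{a} u^{-1}$ (via the $L^p$ estimate with $p<0$ on supersolutions), the lemma upgrades the $L^p$ control to $L^\infty$ control uniformly for small $p$, yielding
\begin{equation*}
	\sup_{Q_-}(e^{-a} u) \le C^\mu, \qquad \inf_{Q_+}(e^{-a} u) \ge C^{-\mu}.
\end{equation*}
Dividing cancels the shift $a$ and gives the desired $\sup_{Q_-} u \le C^{2\mu}\inf_{Q_+} u$. The main obstacle is clearly the second step: constructing a family of parabolic trajectories along which $\int|\nabla \log u|^2$ is controlled despite $A$ being only $L^\infty$, and organising them so that the measure-theoretic counting produces the correct weak $L^1$-bound. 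The first and third steps are then routine given the existing Moser and Bombieri--Giusti machinery.
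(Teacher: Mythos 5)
Your three-step plan matches the paper's architecture exactly (Lemmas \ref{lem:lplinf}, \ref{lem:weakl1log} or its trajectorial variant \ref{lem:weakl1log2}, and \ref{lem:bomb}, applied once to $e^{-a}u$ near $Q_-$ and once to $e^{a}u^{-1}$ near $Q_+$), but two details in the plan are wrong as written and would block the execution.

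The weak $L^1$ estimates in your second step point in the wrong direction. For a positive supersolution $u$, $g = \log u$ is a weak supersolution of $\partial_t g \ge \nabla\cdot(A\nabla g) + \langle A\nabla g, \nabla g\rangle$, so the weighted average $t \mapsto c_\varphi^{-1}\int g(t,\cdot)\,\varphi^2\dx x$ is essentially nondecreasing. Hence it is the \emph{upper} tail $\{\log u > a + s\}$ that is rare at \emph{early} times (on $K_-\supset Q_-$) and the \emph{lower} tail $\{\log u < a - s\}$ that is rare at \emph{late} times (on $K_+\supset Q_+$); compare Lemma \ref{lem:weakl1log}. You claim the reverse. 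This is not cosmetic: in your third step, applying Bombieri--Giusti to $f_2 = e^{-a}u$ on the family shrinking to $Q_-$ requires precisely a bound on $\nu(\{\log f_2 > s\}) = \nu(\{\log u > a + s\})$ on the early cylinder, which your Step 2, as stated, does not supply; similarly for $f_1 = e^{a}u^{-1}$ near $Q_+$.

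Your $\mu$-bookkeeping is also incorrect and would not produce the claimed $C^\mu$. In Lemma \ref{lem:lplinf} the constant $c_1$ is $\mu$-\emph{independent}, and the ellipticity is paid for by restricting to $\abs{p} < 1/\mu$; in Lemma \ref{lem:weakl1log} the bound is \emph{linear} in $\mu$, not $c^\mu$. The Bombieri--Giusti lemma is designed to convert exactly these two structural inputs (a $\tilde\mu$-independent sup-average constant $C_1$ valid for $0 < p < 1/\tilde\mu$, plus a weak $L^1$ bound $C_2\tilde\mu\,\nu(U_1)/s$) into the conclusion $\exp\bigl([2C_2 + 8C_1^3/(1-\delta)^{2\gamma}]\tilde\mu\bigr)$, i.e.\ $C^{\tilde\mu}$. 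If, as you propose, you already have $c(d)^\mu$ as the sup-average constant, then since $C_1$ enters cubed in the conclusion of Lemma \ref{lem:bomb}, you would obtain a dependence of the form $\exp(c^{3\mu}\mu)$, much worse than $C^\mu$. The place where $\mu$ is allowed to appear is the exponent range and the linear factor, nowhere else before the final exponentiation.
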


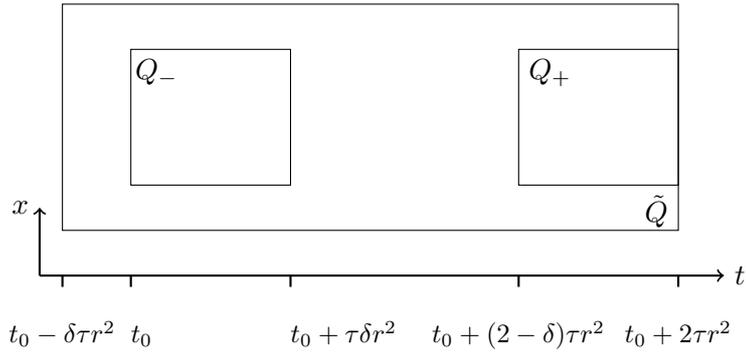
\begin{figure}[H]
\centering
\tikzmath{\x = 0.2; \y = 0.2; \offer = 0.5;}
	\begin{tikzpicture}[scale=3]
  \draw[thick,->] (0, 0) -- (2.5+\offer,0) node[right] {$t$};
  \draw[thick,->] (0, 0) -- (0,0.3) node[left] {$x$};
  \draw[draw=black] (0.3-\x,0.4-\y) rectangle ++(2.2+\offer,1);
  \draw[draw=black] (1.8+\offer-\x,0.6-\y) rectangle ++(0.7,0.6);
  \draw (1.8+\offer-\x,1.2-\y) node[anchor=north west] {$Q_+$};
  \draw[draw=black] (0.6-\x,0.6-\y) rectangle ++(0.7,0.6);
  \draw (0.85-\x,1.2-\y) node[anchor=north east] {$Q_-$};
  \draw (2.5-\x+\offer,0.6-\y) node[anchor=north east] {$\tilde{Q}$};
  \draw [thick] (0.3-\x, 0) -- ++(0, -.05) ++(0, -.15) node [below, outer sep=0pt, inner sep=0pt] {\small\(t_0-\delta\tau r^2\)};
  \draw [thick] (0.6-\x, 0) -- ++(0, -.05) ++(0, -.15) node [below right, outer sep=0pt, inner sep=0pt] {\small\(t_0 \vphantom{r^2}\)};
  \draw [thick] (1.3-\x, 0) -- ++(0, -.05) ++(0, -.15) node [below right, outer sep=0pt, inner sep=0pt] {\small\(t_0+\tau\delta r^2\)};
  \draw [thick] (1.8-\x+\offer, 0) -- ++(0, -.05) ++(0, -.15) node [below, outer sep=0pt, inner sep=0pt] {\small\(t_0+(2-\delta)\tau r^2\)};
  \draw [thick] (2.5-\x+\offer, 0) -- ++(0, -.05) ++(0, -.15) node [below, outer sep=0pt, inner sep=0pt] {\small\(t_0+2\tau r^2\)};
 \end{tikzpicture}
 \caption{The sets $\tilde{Q},Q_-,Q_+$ in the Harnack inequality of Theorem \ref{thm:harnack}.}
\end{figure}

For the parabolic problem \eqref{eq:par}, the Harnack inequality was proven first in \cite{moser_harnack_1964,moser_correction_1967}. A few years later, Moser gave a new proof of the Harnack inequality in \cite{moser_pointwise_1971}, which avoids the technical approach via $\mathrm{BMO}$ functions. We sketch the refined proof, compare also \cite{bonforte_explicit_2020,saloff_aspects_2001} for a modern exposition. The Harnack inequality is a strong \textit{a priori} estimate and plays a crucial role in the regularity theory of partial differential equations. 

The first ingredient of Moser's proof is an $L^p-L^\infty$ estimate for small $p$. 

\begin{lemma}\label{lem:lplinf}
	Let $\delta \in (0,1)$, $\delta \le r < R \le 1$ and $t_0 \in (0,T)$, $x_0 \in \Omega$ with $Q_R(t_0,x_0) \subset \Omega_T$. There exists a constant $c_1 = c_1(d,\delta)$ such that for any positive weak solution $u$ to equation \eqref{eq:par} in $\Omega_T$ we have\begin{align*}
\sup _{Q_r(t_0,x_0)} u^p &\leq \frac{c_1}{(R-r)^{d+2}} \iint\limits_{Q_R(t_0,x_0)}  u^p \dx x \dx t, \quad\quad \forall p \in\left(0, \frac{1}{\mu}\right), \\
\sup _{Q_r^{-}(t_0,x_0)} u^p &\leq \frac{c_1}{(R-r)^{d+2}} \iint\limits_{Q_R^{-}(t_0,x_0)} u^p \dx x \dx t, \quad\quad \forall p \in\left(-\frac{1}{\mu}, 0\right).
\end{align*}
\end{lemma}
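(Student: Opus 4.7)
The argument is Moser iteration. I would derive a Caccioppoli-type energy inequality for powers of $u$, upgrade it via parabolic Sobolev embedding to a reverse H\"older gain of integrability, iterate this gain along a geometric sequence of exponents to produce an $L^{p_0}$--$L^\infty$ bound for a suitable $p_0 > 1$, and finally bootstrap down to arbitrary $p \in (0, 1/\mu)$ (and dually to $p \in (-1/\mu, 0)$) by the standard interpolation trick.

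Testing the equation against $\phi^2 (u+\epsilon)^{\alpha-1}$, for $\epsilon > 0$ (removed at the end by monotone convergence) and a smooth space-time cutoff $\phi$ supported in a cylinder $Q_{R'}$ (respectively $Q_{R'}^-$ when $\alpha < 0$), then invoking (H1), (H2) together with Young's inequality yields the Caccioppoli estimate
\begin{equation*}
 \esssup_t \int \phi^2 u^\alpha \dx x + \iint \phi^2 |\nabla u^{\alpha/2}|^2 \dx x \dx t \le C_\alpha \iint (|\nabla\phi|^2 + |\partial_t \phi^2|)\, u^\alpha \dx x \dx t,
\end{equation*}
valid for $\alpha \in \R \setminus [0,1]$, with $C_\alpha$ depending only on $\mu$. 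The sign of $\alpha$ dictates which time-boundary term is discarded, hence symmetric cylinders $Q_r$ for $\alpha > 0$ and backward cylinders $Q_r^-$ for $\alpha < 0$, as in the statement. Applying the parabolic Sobolev inequality $\iint w^{2\kappa} \le C (\sup_t\!\int w^2)^{2/d}\iint|\nabla w|^2$, with $\kappa = 1 + 2/d$, to $w = \phi u^{\alpha/2}$ converts this into the reverse H\"older gain
\begin{equation*}
 \Bigl(\iint_{Q_{r'}} u^{\alpha\kappa} \dx x \dx t\Bigr)^{1/\kappa} \le \frac{C_\alpha}{(R'-r')^2}\iint_{Q_{R'}} u^\alpha \dx x \dx t .
\end{equation*}

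Iterating with $\alpha_k = p_0 \kappa^k$ (for fixed $p_0 > 1$) on cylinders $Q_{r_k}$, $r_k = r + (R-r) 2^{-k}$, and cutoffs $\phi_k$ obeying $|\nabla \phi_k| + |\partial_t \phi_k|^{1/2} \lesssim 2^k (R-r)^{-1}$, the bounds telescope: the $k$-th step contributes $[C \cdot 4^k (R-r)^{-2}]^{\kappa^{-k}/p_0}$, and the convergence of $\sum_k \kappa^{-k} = (d+2)/2$ and $\sum_k k \kappa^{-k}$ renders the infinite product finite. Sending $k \to \infty$ gives $\sup_{Q_r} u \le c (R-r)^{-(d+2)/p_0}\|u\|_{L^{p_0}(Q_R)}$. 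To descend to $p \in (0, 1/\mu)$, I would use the interpolation $\|u\|_{L^{p_0}}^{p_0} \le \|u\|_{L^\infty}^{p_0 - p} \|u\|_{L^p}^p$, insert it into the previous display and absorb $\|u\|_{L^\infty}$ on the left (possible because $p_0 > p$), producing the advertised inequality. The case $p \in (-1/\mu, 0)$ proceeds identically on backward cylinders.

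The principal technical difficulty is keeping the Caccioppoli constants $C_{\alpha_k}$ under uniform control along the iteration: a direct computation produces a factor $\sim \alpha/(\alpha-1)$ that degenerates at $\alpha = 1$ (for $\alpha > 0$), so one must start the iteration above this singular value. The hypothesis $|p| < 1/\mu$ is exactly what provides the safety margin both for the iteration to avoid this value and for the subsequent interpolation bootstrap to yield a constant of the stated form $c_1 = c_1(d,\delta)$. The remaining bookkeeping (cutoff derivative estimates, the $\epsilon \downarrow 0$ limit, justification of the test function via Steklov averaging) is routine.
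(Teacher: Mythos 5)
The paper does not prove Lemma~\ref{lem:lplinf}; it only cites \cite{bonforte_explicit_2020,moser_harnack_1964,saloff_aspects_2001}, so there is no in-paper argument against which to compare line by line. Your outline is the standard Moser-iteration route that those references follow: Caccioppoli estimate for powers, parabolic Sobolev, reverse H\"older gain of integrability with exponent $\kappa = 1+2/d$, telescoping on a shrinking chain of cylinders, and a final interpolation step to reach small exponents. At the level of strategy this is the right proof, and you correctly identify that the degeneration of the Caccioppoli constant near $\alpha=1$, and the choice of backward versus symmetric cylinders depending on the sign of the time-boundary term, are the main points of care.

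That said, there is a genuine gap in the way you dispatch the key quantitative claim of the lemma, namely that $c_1 = c_1(d,\delta)$ is \emph{independent of} $\mu$. This independence is not a cosmetic detail: the lemma of Bombieri and Giusti (Lemma~\ref{lem:bomb}) cubes the constant $C_1$ inside $\exp(\cdot \,\tilde\mu)$, so any $\mu$-dependence in $c_1$ would destroy the form $C^\mu$ of the Harnack constant in Theorem~\ref{thm:harnack}. Your iteration scheme — start at a fixed $p_0>1$, climb to $L^\infty$, and then interpolate downward to $p\in(0,1/\mu)$ via Young's inequality and the radius-filling lemma — produces a constant inheriting the $\mu$-dependence of the Caccioppoli and reverse-H\"older constants; the interpolation step does not cancel it, and the assertion that ``the hypothesis $|p|<1/\mu$ is exactly what provides the safety margin \dots to yield a constant of the stated form $c_1 = c_1(d,\delta)$'' is stated without a mechanism. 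In the references, and in Moser's Lemma~1 of \cite{moser_pointwise_1971}, the $\mu$-independence is obtained by iterating \emph{upward from $\alpha_0=p$ itself}, exploiting that the accumulated $\mu$-power in $\prod_k D_k^{\kappa^{-k}}$ appears raised to the $1/p$ power and is then compensated precisely by the constraint $p<1/\mu$ together with a careful tracking of how the Caccioppoli constant depends on $\alpha$ near the singular value $\alpha=1$. Related to this, by restricting your Caccioppoli lemma to $\alpha\in\R\setminus[0,1]$ you exclude the very range $\alpha\in(0,1/\mu)\subset(0,1)$ that the starting exponent of the iteration must lie in; you then need a separate energy estimate (and a different test function) for small positive $\alpha$, which your proposal does not supply. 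So the architecture is right, but the proposal as written does not establish the $\mu$-independence that makes the lemma useful, and it silently skips the $\alpha\in(0,1)$ Caccioppoli that the direct iteration requires.
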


\begin{proof}
	The proof can be found in \cite{bonforte_explicit_2020,moser_harnack_1964,saloff_aspects_2001}.
\end{proof}
\begin{figure}[H]
\begin{minipage}{.5\textwidth}
\tikzmath{\x = 0.2; \y = 0.2;}
	\begin{tikzpicture}[scale=3]
  \draw[thick,->] (0, 0) -- (0.3,0) node[right] {$t$};
  \draw[thick,->] (0, 0) -- (0,0.3) node[left] {$x$};
  \draw[draw=black] (0.4-\x,0.4-\y) rectangle ++(1.5,1);
  \draw[draw=black] (0.75-\x,0.6-\y) rectangle ++(0.8,0.6);
  \draw (0.4-\x,1.4-\y) node[anchor=north west] {\tiny $Q_R(t_0,x_0)$};
  \draw (0.75-\x,1.2-\y) node[anchor=north west] {\tiny $Q_r(t_0,x_0)$};
  \filldraw (1.15-\x,0.9-\y) circle[radius=0.4pt];
  \draw (1.15-\x,0.9-\y) node[anchor= west] {\tiny $(t_0,x_0)$};
 \end{tikzpicture}
\end{minipage}%
\begin{minipage}{.5\textwidth}
\tikzmath{\x = 0.2; \y = 0.2;}
	\begin{tikzpicture}[scale=3]
  \draw[thick,->] (0, 0) -- (0.3,0) node[right] {$t$};
  \draw[thick,->] (0, 0) -- (0,0.3) node[left] {$x$};
  \draw[draw=black] (0.4-\x,0.4-\y) rectangle ++(1.5,1);
  \draw[draw=black] (1.1-\x,0.6-\y) rectangle ++(0.8,0.6);
  \draw (0.4-\x,1.4-\y) node[anchor=north west] {\tiny $Q_R^-(t_0,x_0)$};
  \draw (1.1-\x,1.2-\y) node[anchor=north west] {\tiny $Q_r^-(t_0,x_0)$};
  \filldraw (1.9-\x,0.9-\y) circle[radius=0.4pt];
  \draw (1.9-\x,0.9-\y) node[anchor= west] {\tiny $(t_0,x_0)$};
 \end{tikzpicture}
	
\end{minipage}
\caption{The cylinders in Lemma \ref{lem:lplinf}.}
\end{figure}
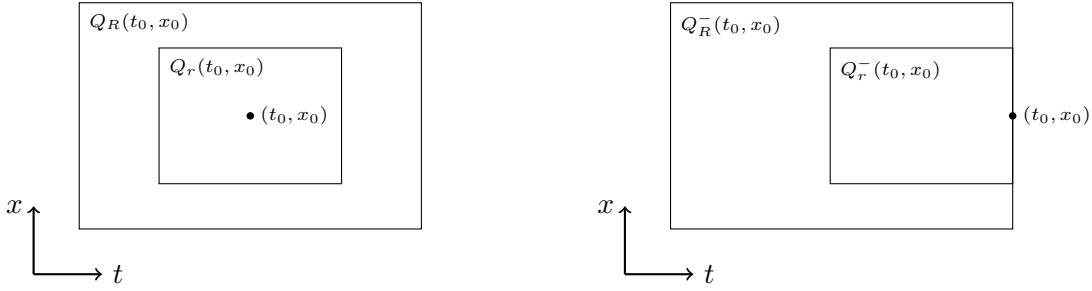

The second ingredient is a weak $L^1$-estimate for the logarithm of supersolutions. 
\begin{lemma} \label{lem:weakl1log}
	Let $\delta, \eta \in (0,1)$ and $\epsilon,\tau >0$. Then for any $t_0 \ge 0$, $r>0$ with $t_0+\tau r^2\le T$, any ball $B = B_r(x_0) \subset \Omega$, and any weak supersolution $u \ge \epsilon>0$ of equation \eqref{eq:par} in $(t_0,t_0+\tau r^2) \times B$, there is a constant $c = c(u)$ such that 
	\begin{align*}
		&\abs{\{ (t,x) \in K_- \colon \log u(t,x) > c+s \}} \le C \mu r^2 \abs{B}s^{-1}, \quad s>0, \\
		&\abs{\{ (t,x) \in K_+ \colon \log u(t,x) < c-s \}} \le C \mu r^2 \abs{B}s^{-1}, \quad s>0, \\
	\end{align*}
	where $K_- = (t_0,t_0+\eta \tau r^2) \times \delta B$, $K_+ = (t_0+\eta \tau r^2, t_0+\tau r^2) \times \delta B$ and $C = C(d,\delta,\eta,\tau)$.
\end{lemma}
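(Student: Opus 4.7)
The plan is to follow the architecture of Moser's proof of Lemma \ref{lem:weakl1log} as presented in \cite{moser_pointwise_1971,saloff_aspects_2001}, but to replace the crucial spatial Poincar\'e inequality step by a trajectorial estimate. Setting $v = \log u$ and formally testing the weak supersolution inequality with $1/u$, which is admissible since $u \ge \epsilon > 0$, one derives the distributional inequality
\[
\partial_t v \,\ge\, \langle A \nabla v, \nabla v\rangle + \nabla \cdot (A \nabla v).
\]
This is the common starting point of all Moser-type proofs; what differs is how the two right-hand side terms are exploited.

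For spacetime points $(s,y)$ and $(t,x)$ with $t_0 \le s < t \le t_0 + \tau r^2$ and $x,y \in B_{\delta r}(x_0)$, I would join them by the straight line $\gamma(\sigma) = y + \tfrac{\sigma-s}{t-s}(x-y)$. A formal integration of the displayed inequality along $\gamma$, combined with the elementary pointwise bound $\langle A\nabla v,\nabla v\rangle + \dot\gamma \cdot \nabla v \ge -|\dot\gamma|^2/(4\lambda)$, yields
\[
v(t,x) - v(s,y) \,\ge\, -\frac{|x-y|^2}{4\lambda(t-s)} + \int_s^t \nabla \cdot (A\nabla v)(\sigma,\gamma(\sigma))\,d\sigma.
\]
In Moser's argument the residual divergence term is reconstructed from an $L^2$-energy estimate via the Poincar\'e inequality. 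Here instead, I would integrate this trajectorial identity against a weight $\phi(y)^2$ with $\phi \in C_c^\infty(B_{\delta r}(x_0))$. Changing variables $y \mapsto \gamma(\sigma)$ at each fixed intermediate time $\sigma$ turns the $y$-averaged trajectory integral into a single spacetime integral of $\nabla \cdot (A\nabla v)$ against a smooth pushforward density $\rho_\sigma$; integration by parts in space then rewrites it as $\int A\nabla v \cdot \nabla \rho_\sigma$, and Cauchy--Schwarz together with the standard (Poincar\'e-free) Caccioppoli estimate for $\log u$ of a supersolution bounds it by a constant that is linear in $\mu$ and otherwise depends only on $d, \delta, \eta, \tau$.

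Let $c$ be the weighted spatial average of $v$ at the transition time $t_* = t_0 + \eta \tau r^2$. The averaged trajectorial inequality would then be applied in two ways: (i) fixing $(t,x) \in K_+$ and averaging over $y$ at $s = t_*$ gives a pointwise lower bound $v(t,x) \ge c - \Psi_+(t,x)$ with $\Psi_+ \in L^1(K_+)$ and $\|\Psi_+\|_{L^1(K_+)} \le C\mu r^2|B|$; (ii) fixing $(s,y) \in K_-$ and averaging over $x$ at $t = t_*$ gives a pointwise upper bound $v(s,y) \le c + \Psi_-(s,y)$ with the analogous $L^1$ estimate. Chebyshev's inequality then yields the two claimed weak $L^1$ bounds. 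The main technical obstacle will be implementing the change-of-variables and integration-by-parts steps rigorously: the trajectorial identity along a single path is only formal because $\nabla v$ is merely locally $L^2$, so the argument must be carried out after spatial averaging, or via a prior mollification of $u$ with passage to the limit using the $\epsilon$-independent Caccioppoli estimate.
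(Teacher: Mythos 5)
The paper does not give a trajectorial proof of Lemma~\ref{lem:weakl1log} at all: it cites Moser's Poincar\'e-based argument, and instead introduces the deliberately weaker Lemma~\ref{lem:weakl1log2}, in which $K_-$ and $K_+$ are separated by a positive time gap $2\iota\tau r^2$. That weakening is not an oversight: the trajectorial method cannot reach the gap-free statement, and your proposal runs into exactly the obstruction the gap is there to remove.

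Two things break concretely. First, your trajectory $\gamma(\sigma) = y + \tfrac{\sigma-s}{t-s}(x-y)$, affine in time, has the wrong geometry. Write $\rho = (\sigma-s)/(t-s)$; after averaging against $\phi^2(y)\,dy$, substituting $z = (1-\rho)y + \rho x$, integrating $\nabla\cdot(A\nabla v)$ by parts in $z$, and back-substituting, a residual factor $1/(1-\rho) = (t-s)/(t-\sigma)$ survives, and $\int_s^t (t-\sigma)^{-1}\,d\sigma$ diverges. The paper avoids this by taking the \emph{parabolic} trajectory $\gamma(r) = (t + r^2(\eta-t),\, x + r(y-x))$: the chain rule then gives $\partial_t g$ the coefficient $2(\eta-t)r$, and the extra factor of $r$ it supplies exactly cancels the $1/r$ produced by $\nabla\phi$ after the spatial substitution. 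This vanishing of the time coefficient at the base point is precisely the parabolic scaling (time of weight $2$, space of weight $1$) and is not available on a straight segment. Second, and more fundamentally, your claim $\|\Psi_+\|_{L^1(K_+)} \le C\mu r^2|B|$ is false as stated: the absorbed term is $|x-y|^2/(4\lambda(t-t_*))$ with $s = t_*$ fixed, $|x-y|\lesssim r$, and $t-t_*$ runs down to $0$ on $K_+$, so $\int_{t_*}^{t_0+\tau r^2}(t-t_*)^{-1}\,dt$ diverges. The same singularity appears in the paper's computation as a factor $1/(\eta-t)$, which is integrable only over $(0,\eta-\iota)$; this is precisely where the gap $\iota$ is used. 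So even after switching to the parabolic trajectory your scheme would prove Lemma~\ref{lem:weakl1log2}, not Lemma~\ref{lem:weakl1log}; a trajectorial proof without the gap would need a genuinely new idea to control the short-time regime near $t = t_*$. The paper's alternative proof of Theorem~\ref{thm:harnack} shows the gapped version is nonetheless sufficient for the Harnack inequality.
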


\begin{proof}
	The result is due to \cite{moser_harnack_1964,moser_correction_1967,moser_pointwise_1971}. A detailed exposition can be found in \cite{bonforte_explicit_2020} for symmetric $A$. The proof also extends to the general case. However, one needs to replace $(1/\lambda+\Lambda)$ by $(1/\lambda+\Lambda)^2$.
\end{proof}

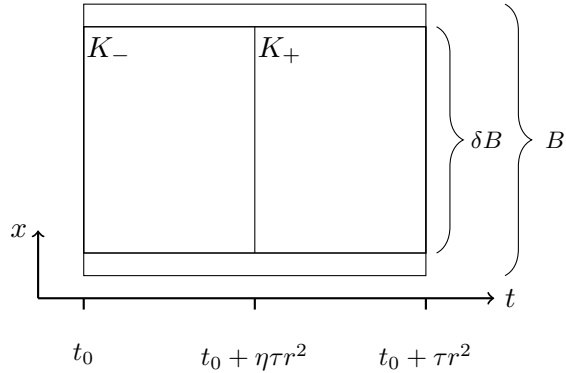
\begin{figure}[H]
\centering
\tikzmath{\x = 0.2; \y = 0.2;}
\begin{tikzpicture}[scale=3]
  \draw (1.15-\x,0.4-\y) -- (1.15-\x,1.4-\y);
  \draw[thick,->] (0, 0) -- (2,0) node[right] {$t$};
  \draw[thick,->] (0, 0) -- (0,0.3) node[left] {$x$};
  \draw[draw=black] (0.4-\x,0.3-\y) rectangle ++(1.5,1.2);
  \draw[draw=black] (0.4-\x,0.4-\y) rectangle ++(1.5,1);
  \draw[draw=black] (0.4-\x,0.4-\y) rectangle ++(1.5,1);
  \draw (0.65-\x,1.4-\y) node[anchor=north east] {$K_-$};
  \draw (1.4-\x,1.4-\y) node[anchor=north east] {$K_+$}; 
  \draw [thick] (0.4-\x, 0) -- ++(0, -.05) ++(0, -.15) node [below, outer sep=0pt, inner sep=0pt] {\small\(t_0\)};
  \draw [thick] (1.15-\x, 0) -- ++(0, -.05) ++(0, -.15) node [below, outer sep=0pt, inner sep=0pt] {\small\(t_0+\eta\tau r^2 \)};
  \draw [thick] (1.9-\x, 0) -- ++(0, -.05) ++(0, -.15) node [below, outer sep=0pt, inner sep=0pt] {\small\(t_0+\tau r^2 \)};
\draw [decorate,decoration={brace,amplitude=10pt,mirror,raise=4pt},yshift=0pt]
(2.2-\x,0.3-\y) -- (2.2-\x,1.5-\y) node [black,midway,xshift=0.8cm] {\footnotesize
$B$};
\draw [decorate,decoration={brace,amplitude=10pt,mirror,raise=4pt},yshift=0pt]
(1.9-\x,0.4-\y) -- (1.9-\x,1.4-\y) node [black,midway,xshift=0.8cm] {\footnotesize
$\delta B$};
\end{tikzpicture}
\caption{The cylinders $K_-,K_+$ and the spatial balls $\delta B,B$ in Lemma \ref{lem:weakl1log}.}
\end{figure}

The information of Lemma \ref{lem:lplinf} and Lemma \ref{lem:weakl1log} is combined using the following lemma, which is originally due to Bombieri and Giusti \cite{bombieri_harnack_1972}.

\begin{lemma} \label{lem:bomb}
	 Let $U_\sigma$, $0<\sigma \le 1$ be a collection of measurable subsets of a fixed finite measure space endowed with a measure $\nu$ such that $U_{\sigma '} \subset U_\sigma$ if $\sigma' \le \sigma$. Furthermore, let $C_1,C_2>0$, $\delta,\eta \in (0,1)$, $\tilde{\mu} > 1$, $\gamma>0$. Suppose that a positive measurable function $f \colon U_1 \to \R$ satisfies the following two conditions:
	\begin{enumerate}
		\item for all $0<\delta \le r < R \le 1$ and $0< p < 1/\tilde{\mu}$ we have \begin{equation*}
			\sup_{U_{r}} f^p \le \frac{C_1}{(R-r)^\gamma \nu(U_1)} \int_{U_R}f^p \dx \nu,
		\end{equation*}
		\item $\nu(\{ \log f > s \}) \le C_2\tilde{\mu} \ \nu(U_1) s^{-1} \text{ for all } s>0 $.
	\end{enumerate}
	Then, 
	\begin{equation*}
		\sup_{U_\delta } f \le \exp\left( \left[2C_2 + \frac{8C_1^3}{(1-\delta)^{2\gamma}} \right] \tilde{\mu} \right).
	\end{equation*}
\end{lemma}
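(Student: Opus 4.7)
My plan is to combine hypotheses (i) and (ii) through a layer-cake decomposition and then iterate the resulting inequality on a sequence of nested radii. After normalizing so that $\nu(U_1)=1$, I split the integral appearing in (i) according to the level set $\{\log f \le s\}$: on this set $f \le e^s$, while on the complement (whose $\nu$-measure is controlled by (ii)) I use only $f \le \sup_{U_R} f$. This gives
\[ \int_{U_R} f^p \, d\nu \le e^{ps} + \frac{C_2\tilde\mu}{s}\sup_{U_R} f^p, \qquad s > 0, \]
and feeding it into (i) yields, for every $\delta \le r < R \le 1$ and $p \in (0, 1/\tilde\mu)$,
\[ \sup_{U_r} f^p \le \frac{C_1}{(R-r)^\gamma}\Bigl(e^{ps} + \frac{C_2\tilde\mu}{s}\sup_{U_R} f^p\Bigr). \]

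Next I choose $s = 2 C_1 C_2 \tilde\mu/(R-r)^\gamma$, which makes the coefficient of $\sup_{U_R} f^p$ equal to $1/2$ and produces the absorbable inequality
\[ \sup_{U_r} f^p \le \tfrac12 \sup_{U_R} f^p + \frac{C_1}{(R-r)^\gamma}\exp\Bigl(\frac{2 p C_1 C_2 \tilde\mu}{(R-r)^\gamma}\Bigr). \]
I iterate this along the sequence $\sigma_k = \delta + (1-\delta)(1-2^{-k})$, so that $\sigma_0 = \delta$, $\sigma_k \nearrow 1$ and $\sigma_{k+1}-\sigma_k = (1-\delta)2^{-k-1}$; chaining $N$ steps gives
\[ \sup_{U_\delta} f^p \le \frac{\sup_{U_{\sigma_N}} f^p}{2^N} + \sum_{k=0}^{N-1}\frac{1}{2^k}\frac{C_1\,2^{\gamma(k+1)}}{(1-\delta)^\gamma}\exp\Bigl(\frac{2 p C_1 C_2 \tilde\mu\,2^{\gamma(k+1)}}{(1-\delta)^\gamma}\Bigr). \]
The a priori finiteness of $\sup_{U_1} f$ (which may be assumed after truncating $f$ by $f \wedge L$ and sending $L \to \infty$ at the end) takes care of the remainder term, and what remains is to choose $p$ and $N$ simultaneously so that the $p$-th root of the sum yields the stated bound $\exp\bigl([2C_2 + 8 C_1^3/(1-\delta)^{2\gamma}]\tilde\mu\bigr)$.

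The hard part is precisely this joint tuning of $p$ and $N$: holding $p$ fixed makes the exponential summands grow like $\exp(c\,2^{\gamma k})$ and overwhelm the geometric weight $2^{-k}$, so the series diverges as $N\to\infty$; on the other hand, driving $p$ to zero tames the exponentials but blows up the constant inherited from the final $p$-th root. The crucial point is that hypothesis (i) is available for \emph{every} $p$ in a whole interval, which permits $p$ to be rescaled together with the iteration depth; this scale-matching is what produces the cubic dependence on $C_1$ and the $(1-\delta)^{-2\gamma}$ exponent in the final constant, and is precisely the technical heart of the Bombieri--Giusti lemma that I would work out in detail at the end.
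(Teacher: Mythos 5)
Your layer-cake split of $\int_{U_R}f^p\,d\nu$ at the level $s$, followed by using hypothesis (ii) on the bad set, is exactly the correct opening move. But the subsequent choice $s = 2C_1C_2\tilde\mu/(R-r)^\gamma$ (so that the coefficient of $\sup_{U_R}f^p$ becomes $1/2$) is where the argument goes off the rails, and the difficulty you flag at the end cannot be repaired by tuning $p$ and $N$ jointly. The reason is that the $p$-dependence cancels \emph{exactly} when you take $p$-th roots: the error term in your iteration at step $k$ is $\frac{C_1 2^{\gamma(k+1)}}{(1-\delta)^\gamma}\exp\bigl(\frac{2pC_1C_2\tilde\mu\,2^{\gamma(k+1)}}{(1-\delta)^\gamma}\bigr)$, and raising this to the power $1/p$ gives a factor $\exp\bigl(\frac{2C_1C_2\tilde\mu\,2^{\gamma(k+1)}}{(1-\delta)^\gamma}\bigr)$, \emph{independent of $p$}, which is doubly exponential in $k$. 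So no choice of $p$ tames the tail, and truncating at finite $N$ leaves an uncontrolled remainder $2^{-N}\sup_{U_{\sigma_N}}f^p$, since $\sup_{U_{\sigma_N}}f\ge\sup_{U_\delta}f$ and the $2^{-N}$ disappears after taking $p$-th roots as $p\to 0$. In short, the freedom in $p$ provided by hypothesis (i) cannot offset a level $s$ that blows up geometrically with the iteration step.

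The actual Bombieri--Giusti argument chooses $s$ not as a function of the step size $R-r$ but as a function of the quantity being estimated, namely proportional to $\sup_{U_\delta}\log f$ (or, in iterated versions, to $\sup_{U_{\sigma_k}}\log f$). Concretely, one assumes for contradiction that $a:=\sup_{U_\delta}\log f$ exceeds the claimed bound, sets $s = a/2$ (so that $C_2\tilde\mu/s<1/2$ automatically once $a>2C_2\tilde\mu$, giving the $2C_2$ term in the answer), and is then left with $e^{pa}\le\frac{C_1}{(R-r)^\gamma}\bigl[e^{pa/2}+\tfrac12 e^{pa(R)}\bigr]$ where $a(R)=\sup_{U_R}\log f$. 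The recursion now lives on the quantity $e^{pa(\sigma)/2}$ and closes because the inhomogeneous term $e^{pa/2}$ is tied to the very quantity one wants to bound, producing a self-improving (bootstrap) inequality rather than a geometric series with a runaway forcing term. This is what generates the $C_1^3$ and $(1-\delta)^{-2\gamma}$ in the final constant. Your write-up correctly identifies that there is a joint tuning problem, but the missing idea is not ``scale $p$ with $N$'' --- it is that the level $s$ must be chosen self-referentially, and that the argument is a bootstrap/contradiction rather than a convergent geometric iteration.
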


\begin{proof}
	The proof can be found in \cite{bonforte_explicit_2020,clement_apriori_2004,saloff_aspects_2001,zacher_harnack_2013}. 	
\end{proof}

With these three ingredients at hand, we can now turn to the proof of the Harnack inequality. We decided to revisit the proof to illustrate the difference in our method.

\begin{proof}[Proof of Theorem \ref{thm:harnack}]

Employing the translation $(t,x) \mapsto (t-t_0,x-x_0)$ and the scaling $r \mapsto (r^2 t, rx)$ we can reduce to the case $(t_0,x_0) = 0$ and $r = 1$. Moreover, by adjusting the parameters suitably, we may assume $(-(1-\delta),2\tau) \times B_\beta \subset \Omega_T$ for some $\beta>1$. Finally, we may assume $u \ge \epsilon$ for some $\epsilon>0$.

We aim to apply Lemma \ref{lem:bomb} twice, first to $f_1 = u^{-1}\exp(c(u))$ and then to $f_2 = u \exp(-c(u))$ on appropriately defined cylinders, where $c(u)$ is the constant given by Lemma \ref{lem:weakl1log}.

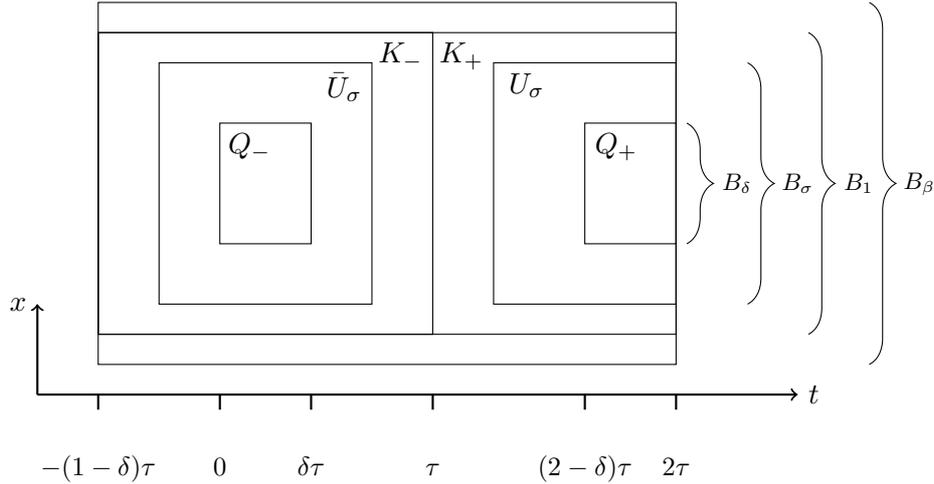
\begin{figure}[H] 
\centering
\tikzmath{\x = -0.2; \y = 0.2; \z = 0.1;}
\begin{tikzpicture}[scale=4]
  \draw[thick,->] (0, 0) -- (2.5,0) node[right] {$t$};
  \draw[thick,->] (0, 0) -- (0,0.3) node[left] {$x$};
  \draw[draw=black] (0.0-\x,0.3-\y) rectangle ++(1.9,1.2);
  \draw[draw=black] (0.0-\x,0.4-\y) rectangle ++(1.9,1);
  \draw[draw=black] (1.6-\x,0.7-\y) rectangle ++(0.3,0.4);
  \draw (1.6-\x,1.1-\y) node[anchor=north west] {$Q_+$};
  \draw[draw=black] (0.5-\x-\z,0.7-\y) rectangle ++(0.3,0.4);
  \draw (0.7-\x-\z,1.1-\y) node[anchor=north east] {$Q_-$};
  \draw[draw=black] (0.0-\x,0.4-\y) rectangle ++(1.1,1);
  \draw[draw=black] (0.2-\x,0.5-\y) rectangle ++(0.7,0.8);
  \draw[draw=black] (1.3-\x,0.5-\y) rectangle ++(0.6,0.8);
  \draw (1.1-\x,1.4-\y) node[anchor=north east] {$K_-$};
  \draw (1.3-\x,1.4-\y) node[anchor=north east] {$K_+$}; 
  \draw (0.9-\x,1.3-\y) node[anchor=north east] {$\bar{U}_\sigma$};
  \draw (1.5-\x,1.3-\y) node[anchor=north east] {$U_\sigma$};
  \draw [thick] (0.0-\x, 0) -- ++(0, -.05) ++(0, -.15) node [below, outer sep=0pt, inner sep=0pt] {\small\(-(1-\delta)\tau \) };
  \draw [thick] (0.5-\x-\z, 0) -- ++(0, -.05) ++(0, -.15) node [below, outer sep=0pt, inner sep=0pt] {\small\(0\)\vphantom{\small\(-(1-\delta)\tau \)}};
   \draw [thick] (0.8-\x-\z, 0) -- ++(0, -.05) ++(0, -.15) node [below, outer sep=0pt, inner sep=0pt] {\small\(\delta \tau \)\vphantom{\small\(-(1-\delta)\tau \)}};
    \draw [thick] (1.1-\x, 0) -- ++(0, -.05) ++(0, -.15) node [below, outer sep=0pt, inner sep=0pt] {\small\(\tau \)\vphantom{\small\(-(1-\delta)\tau \)}};
      \draw [thick] (1.6-\x, 0) -- ++(0, -.05) ++(0, -.15) node [below, outer sep=0pt, inner sep=0pt] {\small\( (2-\delta)\tau  \)};
      \draw [thick] (1.9-\x, 0) -- ++(0, -.05) ++(0, -.15) node [below, outer sep=0pt, inner sep=0pt] {\small\(2 \tau \)\vphantom{\small\(-(1-\delta)\tau \)}};
  \draw [decorate,decoration={brace,amplitude=10pt,mirror,raise=4pt},yshift=0pt]
(1.9-\x,0.7-\y) -- (1.9-\x,1.1-\y) node [black,midway,xshift=0.8cm] {\footnotesize
${B}_{\delta}$};
\draw [decorate,decoration={brace,amplitude=10pt,mirror,raise=4pt},yshift=0pt]
(2.1-\x,0.5-\y) -- (2.1-\x,1.3-\y) node [black,midway,xshift=0.8cm] {\footnotesize
${B}_{\sigma}$};
\draw [decorate,decoration={brace,amplitude=10pt,mirror,raise=4pt},yshift=0pt]
(2.3-\x,0.4-\y) -- (2.3-\x,1.4-\y) node [black,midway,xshift=0.8cm] {\footnotesize
${B}_{1}$};
\draw [decorate,decoration={brace,amplitude=10pt,mirror,raise=4pt},yshift=0pt]
(2.5-\x,0.3-\y) -- (2.5-\x,1.5-\y) node [black,midway,xshift=0.8cm] {\footnotesize
$B_\beta$};
\end{tikzpicture}
\caption{The cylinders $Q_-,Q_+,\bar{U}_\sigma,U_\sigma,K_-,K_+$ and the spatial balls $B_\delta,B_\sigma,B_1,B_\beta$ in the proof of Theorem \ref{thm:harnack} for $(t_0,x_0) = 0$ and $r = 1$. \label{fig:harnack}}
\end{figure}

Let us define the families of cylinders, illustrated in Figure \ref{fig:harnack}. For $0<\delta \le \sigma\le 1$ we set $U_\sigma = ((2-\sigma)\tau,2\tau) \times B_\sigma(0) $ and $ \bar{U}_\sigma = ( -(\sigma-\delta)\tau,\sigma \tau) \times B_\sigma(0) $. Set $K_- = \bar{U}_1$, $Q_- = \bar{U_\delta}$, $K_+ = U_1$ and $Q_+ = U_\delta$. 

 Next, with a slight abuse of notation, we apply Lemma \ref{lem:weakl1log} with $t_0 = -(1-\delta)$, $x_0 = 0$, $\tau = 2\tau+(1-\delta)\tau  \frac{1}{\beta^2} = (3-\delta)\tau \frac{1}{\beta^2} $, $\eta = \frac{2-\delta}{3-\delta}$, $r= \beta$ and $\delta = 1/\beta$. This gives the following estimate for the logarithm of $u$ on $K_-$ and $K_+$
 \begin{align}
		&\abs{\{ (t,x) \in K_- \colon \log u(t,x) > c+s \}} \le c_1 \mu  r^2 \abs{B}s^{-1}, \quad s>0, \label{eq:har1}\\
		&\abs{\{ (t,x) \in K_+ \colon \log u(t,x) < c-s \}} \le c_1 \mu  r^2 \abs{B}s^{-1}, \quad s>0,\label{eq:har2}
	\end{align}
	where $c_1 = c_1(d,\delta,\tau)$ and $c = c(u)$.

We consider $f_1 = u^{-1}\exp(c(u))$ on the sets $U_\sigma$. Lemma \ref{lem:lplinf} gives
\begin{equation*}
	\sup_{{U}_\sigma} f_1^p \le   \frac{ c_2\abs{{U}_1}^{-1}}{(\sigma'-\sigma)^{d+2}} \iint_{{U}_{\sigma'}} f_1^p \dx x \dx t, \quad \delta \le \sigma < \sigma'\le1 
\end{equation*}
for some constant $c_2 = c_2(d,\delta)>0$ and all $p \in (0,1/\mu)$. This, together with the estimate \eqref{eq:har2}, allows applying the lemma of Bombieri and Giusti to deduce 
\begin{equation*}
e^{c(u)} \le \exp\left(C\mu \right) \inf_{Q_+} u.
\end{equation*} 
with $C = C(d,\delta,\tau)$. 

Next, we consider $f_2 = u \exp(-c(u))$ on the sets $\bar{U}_\sigma$. Clearly, by the estimate in \eqref{eq:har1}, the second assumption of the lemma of Bombieri and Giusti is satisfied for $f_2$. Concerning the first assumption of the lemma of Bombieri and Giusti, we employ the estimates of Lemma \ref{lem:lplinf}. We have 
\begin{equation*}
	\sup_{\bar{U}_\sigma} f_2^p \le   \frac{ c_2\abs{\bar{U}_1}^{-1}}{(\sigma'-\sigma)^{d+2}} \iint_{\bar{U}_{\sigma'}} f_2^p \dx x \dx t, \quad \delta \le \sigma < \sigma'\le1 
\end{equation*}
for $p \in (0,1/\mu)$, which shows that the first assumption is satisfied. Consequently,
\begin{equation*}
	\sup_{Q_-} u \le e^{c(u)} \exp\left(C\mu \right)
\end{equation*}
for some $C = C(d,\delta,\tau)$ independent of $\mu $. Together we deduce
\begin{equation*}
	\sup_{Q_-} u \le C^\mu \inf_{Q_+} u.
\end{equation*}
for some constant $C = C(d,\delta,\tau)$.

\end{proof}

\section{Employing parabolic trajectories to prove a weak $L^1$-estimate for the logarithm of supersolutions}



To the authors' knowledge, the strategy used in Moser's work is the only known technique available to prove a weak $L^1$-estimate of the type in Lemma \ref{lem:weakl1log}. The method has been used in different contexts, such as parabolic non-local problems \cite{kassmann_local_2013}, time-fractional equations \cite{zacher_harnack_2013}, and many more \cite{albritton_regularity_2021,delmotte_parabolic_1999,lu_horm_1992}. However, in every one of those works, a particular product structure of the problem is used, detaching the time propagation from the diffusive part of the equation. A Poincar\'e inequality gives some control of the spatial part, which is then propagated in time by an intricate argument. 

We propose a novel way to prove a version of Lemma \ref{lem:weakl1log}, see Lemma \ref{lem:weakl1log2}. We do not use any Poincar\'e inequality in the spatial variable but compare the function directly at two points in space-time by following a suitable trajectory. The obtained result is slightly weaker because we need a positive gap between $K_-$ and $K_+$. However, it is still strong enough to prove the Harnack inequality of Theorem \ref{thm:harnack}.

\begin{lemma} \label{lem:weakl1log2}
	Let $\delta, \eta \in (0,1)$, $\iota < \min \{ \eta, 1-\eta \}$ and $\epsilon,\tau>0$. Then, for any $t_0 \ge 0$ and $r>0$ with $t_0+\tau r^2 \le T$ any ball $B = B_r(x_0) \subset \Omega$ and any weak supersolution $u \ge \epsilon>0$ of \eqref{eq:par} in $(t_0,t_0+\tau r^2) \times B$, there is a constant $c = c(u)$ such that 
	\begin{equation*}
		\abs{\{ (t,x) \in K_- \colon \log u (t,x) > c+s \}} \le C\mu r^2\abs{B}s^{-1}, \quad s>0,
	\end{equation*}
	and
	\begin{equation*}
		\abs{ \{ (t,x) \in K_+ \colon \log u (t,x) < c-s \}} \le C\mu r^2\abs{B}s^{-1}, \quad s>0,
	\end{equation*}
	where $K_- = (t_0,t_0+(\eta-\iota) \tau r^2) \times \delta B$, $K_+ = (t_0+(\eta+\iota) \tau r^2,t_0+\tau r^2) \times \delta B$ and $C = C(d,\delta,\eta,\iota,\tau)$.
\end{lemma}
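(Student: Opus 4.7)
The plan is to exploit the differential inequality satisfied by $v := \log u$. Since $u \ge \epsilon > 0$ is a weak supersolution, testing against the admissible test function $\phi/u$ (with $\phi \ge 0$ smooth and compactly supported) and rearranging yields the weak differential inequality
\begin{equation*}
\partial_t v - \nabla \cdot (A \nabla v) \ge \langle A \nabla v, \nabla v \rangle \ge \lambda |\nabla v|^2.
\end{equation*}
This is the classical starting point shared with Moser's proof; the new idea lies in how the positive quadratic gradient term is exploited.

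The key device is a \emph{trajectorial} energy identity. For a smooth nonnegative density $\rho(t, x)$ with $\rho(t, \cdot)$ compactly supported in $B$ at each $t$, and a smooth velocity field $W$ satisfying the continuity equation $\partial_t \rho + \nabla \cdot (\rho W) = 0$, the quantity $G(t) := \int \rho(t, \cdot) v(t, \cdot) \dx x$ satisfies
\begin{equation*}
G(t_2) - G(t_1) \ge -\frac{1}{4\lambda}\int_{t_1}^{t_2}\!\!\int \rho\,|W - A^T \nabla \log \rho|^2 \dx x\dx t.
\end{equation*}
This is obtained by differentiating $G$, substituting the differential inequality for $v$, integrating by parts in space to shift gradients off $v$, and completing the square via Young's inequality. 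In the symmetric case an $A$-weighted Cauchy--Schwarz produces a sharper bound with $\Lambda$ in place of $\Lambda^2/\lambda$, giving the correct linear dependence on $\mu$.

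The ``parabolic trajectories'' enter through the explicit choice of $(\rho, W)$. For each pair $(t_1, x_1) \in K_-$ and $(t_2, x_2) \in K_+$, take $\rho(t, y) := \eta_\sigma(y - X(t))$, a smooth bump of fixed width $\sigma = \beta r$ centered on the straight-line path $X(t) := x_1 + \frac{t - t_1}{t_2 - t_1}(x_2 - x_1)$, together with $W := \dot X$; here $\beta$ is small enough (depending on $\delta, \eta, \iota$) so that $\rho(t, \cdot)$ remains supported in $B$. Using the positive gap $t_2 - t_1 \ge 2\iota \tau r^2$, the bound $|x_2 - x_1| \le 2\delta r$, and $\int \rho\, |\nabla \log \rho|^2 \lesssim 1/\sigma^2$, one computes the total cost to be at most $C(d, \delta, \eta, \iota, \tau)\,\mu$. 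Writing $\tilde v(t, x) := (\eta_\sigma \ast v(t, \cdot))(x)$ for the spatial mollification, this yields
\begin{equation*}
\tilde v(t_2, x_2) \ge \tilde v(t_1, x_1) - C\mu \quad \text{for every } (t_1, x_1) \in K_-,\ (t_2, x_2) \in K_+,
\end{equation*}
and in particular $\sup_{K_-} \tilde v \le \inf_{K_+} \tilde v + C\mu$.

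Choosing $c = c(u)$ so that $\tilde v \le c$ on $K_-$ and $\tilde v \ge c - C\mu$ on $K_+$, it remains to convert these pointwise bounds on the mollified $\tilde v$ into weak $L^1$ bounds on $v$ itself. For a threshold $s > 0$ and the bad set $E_s := \{v > c + s\} \cap K_-$, the decomposition $\tilde v(t, x) = \int_{E_s}\eta_\sigma v + \int_{E_s^c}\eta_\sigma v$ combined with $v \ge \log \epsilon$ and $\tilde v(t, x) \le c$ produces a Chebyshev-type bound on $\int \eta_\sigma(y - x)\, \mathbf{1}_{E_s}(y)\dx y$; integrating in $x$ over $\delta B$ and using $(\eta_\sigma \ast \mathbf{1}_{\delta B})(y) = 1$ on $(\delta - \beta) B$ then delivers $|E_s| \le C \mu r^2 |B|/s$ after absorbing $\beta$ into the geometric parameters. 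The lower tail on $K_+$ follows by the symmetric argument. I expect the main obstacle to be precisely this final Fubini step: the constant $c(u)$ must be chosen to absorb the $\epsilon$-dependence coming from $v \ge \log\epsilon$, so that the constant in the weak $L^1$ bound is genuinely independent of $\epsilon$ and depends only on $d, \delta, \eta, \iota, \tau$.
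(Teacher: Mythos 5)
Your overall strategy—propagate a reference density along a trajectory and use the supersolution property together with the negative quadratic term $-\langle A\nabla v,\nabla v\rangle$ to pay the transport cost—is exactly in the spirit of the paper, and your ``trajectorial energy identity'' with the cost $\frac{1}{4\lambda}\int\rho\,|W-A^T\nabla\log\rho|^2$ is a clean way to organize it. The cost computation for the straight-line path with a fixed-width bump and the $A$-weighted refinement in the symmetric case are correct and do give the right linear dependence on $\mu$. However, there is a genuine gap at the final step, which you yourself flag but do not resolve.

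The issue is that your estimate lands on the \emph{mollified} function $\tilde v=\eta_\sigma * v$ at both endpoints, because $\rho$ has a fixed positive width $\sigma=\beta r$ throughout $[t_1,t_2]$. You then need to transfer a pointwise bound on $\tilde v$ to a weak $L^1$ bound on $v$ itself. The Chebyshev-type decomposition you sketch gives, for $\alpha(t,x)=\int\eta_\sigma(y-x)\mathbf 1_{E_s}(t,y)\,\mathrm dy$, the inequality $c\ge\log\epsilon+(c+s-\log\epsilon)\,\alpha$, i.e.\ $\alpha\le (c-\log\epsilon)/(s+c-\log\epsilon)$. This degenerates to the trivial bound $\alpha\le 1$ as $\epsilon\to 0$; there is no way to recover a constant independent of $\epsilon$ from this line of reasoning, because a deep well in $v$ near $x$ (which costs nothing in the supersolution inequality) can mask an arbitrarily large bad set under the convolution. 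One might hope to repair this by shrinking the bump to a Dirac at $t=t_1$, but with a profile of width $\sigma(t)$, the cost picks up terms $\int_{t_1}^{t_2}(\dot\sigma^2+\Lambda^2/\sigma^2)\,\mathrm dt$, and no choice of $\sigma(t)\to 0$ makes both converge: parabolic scaling $\sigma\sim(t-t_1)^{1/2}$ gives a logarithmic divergence. So, with linear-in-$t$ transport, the starting endpoint must remain mollified, and the argument is stuck.

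The paper avoids this obstruction by using the trajectory $\gamma(r)=\bigl(t+r^2(\eta-t),\,x+r(y-x)\bigr)$ with \emph{quadratic} time parametrization and a reference level $c(u)=c_\varphi^{-1}\int_B g(\eta,y)\varphi^2(y)\,\mathrm dy$ taken on a single time slice. The trajectory starts exactly at the unmollified point $(t,x)$; the fan of endpoints over $y\in B$ provides the spatial averaging only at the reference slice. After integration by parts (via the change of variables $\tilde y=\gamma_x(r)$), the factor of $r$ coming from $\frac{\mathrm d}{\mathrm dr}\gamma_t(r)=2r(\eta-t)$ cancels the $1/r$ produced by $\nabla_{\tilde y}\varphi(y(\tilde y))$; this cancellation is what makes the shrinking fan integrable and is precisely the mechanism you lose with a constant-width density and linear time. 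The paper then bounds $\int_{K_-}(g-c(u))_+$ directly by absorbing the transport terms into $-r\,|\nabla g|_A^2$ via Young's inequality on bounded $r$-intervals; the weak $L^1$ bound is then an immediate Markov inequality with no $\epsilon$ in sight. Your energy-identity viewpoint is elegant, but to close the proof you would need either to adopt a parabolically-scaled, shrinking density so as to recover the paper's cancellation, or to find a fundamentally different route from the mollified pointwise estimate to the unmollified weak $L^1$ estimate.
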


\begin{proof}
	Consider $\tau = 1$ for simplicity and assume $r = 1$ and $(t_0,x_0) = 0$ by scaling and translation. Let us first assume that $A$ is symmetric. We write $B = B_1(0)$ and $\delta B = B_{\delta}(0)$. Let $u\ge \epsilon >0$ be a weak supersolution to equation \eqref{eq:par}, then $g = \log(u)$ is a weak supersolution to 
	\begin{equation*}
		\partial_t g = \nabla \cdot (A \nabla g) + \langle A \nabla g, \nabla g \rangle.
	\end{equation*}
	Let $\varphi \in C_c^\infty(\R^d;[0,1])$ be such that $\varphi = 1$ in $\delta B$ and $\varphi = 0$ in $B^c$ with $\norm{\nabla \varphi}_\infty \le \frac{2}{1-\delta}$. We set $c_\varphi = \int_{B}\varphi^2(x) \dx x$ and
	\begin{equation*}
		c(u) =\frac{1}{c_\varphi} \int_{B} g(\eta,y) \varphi^2(y) \dx  y.
	\end{equation*}
	Let $(t,x) \in K_-$, $y \in B$ and define $\gamma(r) = (\gamma_t(r),\gamma_x(r)) = (t+r^2(\eta-t),x+r(y-x))$, then $\gamma(r) \in (0,1) \times B$ for all $r \in [0,1]$.  This \textit{parabolic trajectory} is connected to the geometric properties of the PDE. The vector fields $\partial_t$ and $\partial_{x_i}$ together with the underlying scaling, determine the \textit{parabolic trajectories} we use to prove Lemma \ref{lem:weakl1log2}. The cylinders and this trajectory are sketched in Figure \ref{fig:log2}. The subscript $t$ also refers to the first component and the subscript $x$ also refers to the last $d$ components of $\gamma$.
	
	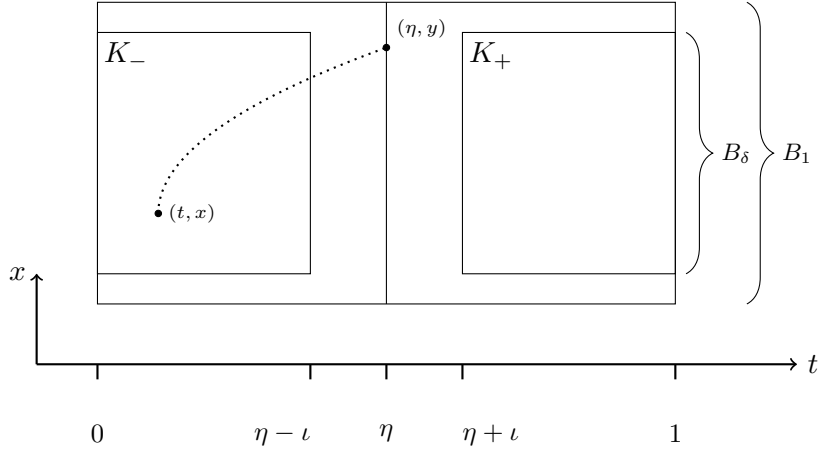
\begin{figure}[H]
\centering
\tikzmath{\x = 0.2; \y = 0.2;}
\begin{tikzpicture}[scale=4]
  \draw (1.35-\x,0.4-\y) -- (1.35-\x,1.4-\y);
  \draw[thick,->] (0, 0) -- (2.5,0) node[right] {$t$};
  \draw[thick,->] (0, 0) -- (0,0.3) node[left] {$x$};
  \draw[draw=black] (0.4-\x,0.4-\y) rectangle ++(1.9,1);
  \filldraw (0.2+1/5,0.5) circle[radius=0.3pt];
  \draw (0.2+1/5,0.5) node[anchor= west] {\tiny $(t,x)$};
  \filldraw (1.15,1.05) circle[radius=0.3pt];
  \draw (1.15,1.05) node[anchor= south west] { \tiny $(\eta,y)$};
  \draw[draw=black] (0.4-\x,0.5-\y) rectangle ++(0.7,0.8);
  \draw[draw=black] (1.6-\x,0.5-\y) rectangle ++(0.7,0.8);
  \draw (0.6-\x,1.3-\y) node[anchor=north east] {$K_-$};
  \draw (1.8-\x,1.3-\y) node[anchor=north east] {$K_+$}; 
  \draw [thick] (0.4-\x, 0) -- ++(0, -.05) ++(0, -.15) node [below, outer sep=0pt, inner sep=0pt] {\small\(0\)};
  \draw [thick] (1.6-\x, 0) -- ++(0, -.05) ++(0, -.15) node [below right, outer sep=0pt, inner sep=0pt] {\small\(\eta+\iota \)};
  \draw [thick] (1.35-\x, 0) -- ++(0, -.05) ++(0, -.15) node [below, outer sep=0pt, inner sep=0pt] {\small\(\eta \)};
  \draw [thick] (1.1-\x, 0) -- ++(0, -.05) ++(0, -.15) node [below left, outer sep=0pt, inner sep=0pt] {\small\(\eta-\iota \)};
  \draw [thick] (2.3-\x, 0) -- ++(0, -.05) ++(0, -.15) node [below, outer sep=0pt, inner sep=0pt] {\small\(1 \)};
\draw [decorate,decoration={brace,amplitude=10pt,mirror,raise=4pt},yshift=0pt]
(2.3-\x,0.5-\y) -- (2.3-\x,1.3-\y) node [black,midway,xshift=0.8cm] {\footnotesize
${B}_{\delta}$};
\draw [decorate,decoration={brace,amplitude=10pt,mirror,raise=4pt},yshift=0pt]
(2.5-\x,0.4-\y) -- (2.5-\x,1.4-\y) node [black,midway,xshift=0.8cm] {\footnotesize
${B}_{1}$};
\draw [dotted,thick,  domain=0:1, samples=40] plot ({0.2+1/5+\x^2*(1.15-1/5-0.2)},{0.5+0.55*\x});
\end{tikzpicture}
\caption{The cylinders $K_-,K_+$ and the spatial balls $\delta B,B$ in the proof of Lemma \ref{lem:weakl1log2} for $\tau = 1$, $(t_0,x_0) = 0$ and $r = 1$. The dotted line depicts the \textit{parabolic trajectory} connecting a point $(t,x) \in K_-$ to a point $(\eta,y)$ with $y \in B$. \label{fig:log2}}
\end{figure}
	
	We have
	\begin{align*}
		g(t,x)-c(u) &= \frac{1}{c_\varphi} \int_B (g(t,x)-g(\eta,y))\varphi^2(y) \dx y = -\frac{1}{c_\varphi} \int_B \int_0^1 \frac{\dx}{\dx r} g(\gamma(r)) \dx r \varphi^2(y) \dx y \\
		&= -\frac{1}{c_\varphi} \int_0^1\int_B \big( 2(\eta-t)r [\partial_t g](\gamma(r)) +  (y-x) \cdot [\nabla g](\gamma(r)) \big)  \varphi^2(y) \dx y \dx r \\
		&\le \frac{1}{c_\varphi}\int_0^1  \int_B \left(-2(\eta-t)r[\nabla \cdot (A \nabla g)](\gamma(r))-2(\eta-t)r[\langle{A \nabla g}, \nabla g \rangle](\gamma(r)) \vphantom{ -(y-x) \cdot [\nabla g](\gamma(r)) \vphantom{\abs{A \nabla g}^2}} \right. \\
		&\hphantom{=\frac{1}{c_\varphi}\int_0^1  \int_B \left( \right.}\left. -(y-x) \cdot [\nabla g](\gamma(r)) \vphantom{\abs{A \nabla g}^2}\right) \varphi^2(y)  \dx y \dx r,
	\end{align*}
	by the suprtsolution property of $g$.
	
	We start by performing a partial integration in the first term. We substitute $\tilde{y} =\Phi(y) =  \Phi_{r,t,x,\eta}(y) := \gamma_x(r)$, hence
	\begin{align*}
		&\int_0^1  \int_B -r[\nabla \cdot (A \nabla g)](\gamma(r)) \varphi^2(y) \dx y \dx r \\
		&= -\int_0^1  \int_{\Phi(B)} [\nabla \cdot (A \nabla g)](\gamma_t(r),\tilde{y}) \varphi^2\left(\frac{1}{r} \tilde{y} +\left(1-\frac{1}{r}\right)x\right) r^{-d+1}\dx \tilde{y} \dx r \\
		&= 2\int_0^1  \int_{\Phi(B)} (A \nabla g)(\gamma_t(r),\tilde{y}) \cdot [\nabla \varphi]\left(\frac{1}{r} \tilde{y} +\left(1-\frac{1}{r}\right)x\right) \varphi\left(\frac{1}{r} \tilde{y} +\left(1-\frac{1}{r}\right)x\right) r^{-d}\dx \tilde{y} \dx r \\
		&= 2\int_0^1  \int_B (A \nabla g)(\gamma(r)) \cdot [\nabla \varphi](y) \varphi(y)\dx {y} \dx r \\
		&\le \frac{4\sqrt{\Lambda}}{1-\delta}\int_0^1  \int_B \abs{\nabla g}_A(\gamma(r)) \varphi(y)\dx {y} \dx r,
	\end{align*}
	by the Cauchy-Schwarz inequality for $\abs{\xi}_A^2 := \abs{\xi}^2_{A(\gamma(r))} := \langle A(\gamma(r)) \xi, \xi \rangle$. 
	
	We continue by estimating
	\begin{align*}
		g(t,x)-c(u) &\le  \frac{1}{c_\varphi}\int_0^1  \int_B \left(-2(\eta-t)r[\nabla \cdot (A \nabla g)](\gamma(r))-(\eta-t)r\abs{\nabla g}_A^2(\gamma(r))\right) \varphi^2(y) \dx y \dx r \\
		&\hphantom{=}+ \frac{1}{c_\varphi}\int_0^1  \int_B \left(-(y-x) \cdot [\nabla g](\gamma(r))-(\eta-t)r\abs{\nabla g}_A^2(\gamma(r))\right) \varphi^2(y) \dx y \dx r \\
		&\le \frac{\eta-t}{c_\varphi}\int_0^1  \int_B \left(  \frac{8\sqrt{\Lambda}}{1-\delta} \abs{\nabla g}_A(\gamma(r))\varphi(y)-r\abs{\nabla g}^2_A(\gamma(r)) \varphi^2(y) \right) \dx y \dx r \\
		&\hphantom{=}+\frac{1}{c_\varphi}\int_0^1  \int_B \left(\frac{2}{\sqrt{\lambda}}\abs{\nabla g}_A(\gamma(r))\varphi(y) -r(\eta-t)\abs{\nabla g}^2_A(\gamma(r)) \varphi^2(y) \right)  \dx y \dx r.
	\end{align*}
	Consequently,
	\begin{align} 
		&\int_0^{\eta-\iota}\int_B (g(t,x)-c(u))_+ \dx x \dx t \label{eq:gmc} \\
		 &\le \frac{1}{c_\varphi} \int_0^{\eta-\iota} \hspace{-5pt} (\eta-t) \int_B  \int_B \int_0^1 \left(  \frac{8 \sqrt{\Lambda}}{1-\delta} \abs{\nabla g}_A(\gamma(r))\varphi(y)-r\abs{\nabla g}^2_A(\gamma(r)) \varphi^2(y) \right)_+ \hspace{-8pt} \dx r \dx y \dx x \dx t  \nonumber\\
		&\hphantom{=}+\frac{1}{c_\varphi} \int_0^{\eta-\iota} \hspace{-5pt} (\eta-t) \int_B  \int_B \int_0^1 \left(  \frac{4\lambda^{-1/2}}{(\eta-t)} \abs{\nabla g}_A(\gamma(r))\varphi(y)-r\abs{\nabla g}^2_A(\gamma(r)) \varphi^2(y) \right)_+ \hspace{-8pt} \dx r \dx y \dx x \dx t.  \nonumber
	\end{align}
	Let $M>0$. Then 
	\begin{align*}
		&\int_0^{\eta-\iota}\int_B  \int_B \int_0^1 \left( M \abs{\nabla g}_A(\gamma(r))\varphi(y)-r\abs{\nabla g}_A^2(\gamma(r)) \varphi^2(y) \right)_+ \dx r \dx y \dx x \dx t \\
		 &= \int_0^{\eta-\iota}\int_B  \int_B \int_{0}^{1/2} \left(  M \abs{\nabla g}_A(\gamma(r))\varphi(y)-r\abs{\nabla g}_A^2(\gamma(r)) \varphi^2(y) \right)_+ \dx r \dx y \dx x \dx t \\
		 &\hphantom{=}+\int_0^{\eta-\iota}\int_B  \int_B \int_{1/2}^1 \left(  M \abs{\nabla g}_A(\gamma(r))\varphi(y)-r\abs{\nabla g}_A^2(\gamma(r)) \varphi^2(y) \right)_+ \dx r \dx y \dx x \dx t =: I_1 +I_2. 
	\end{align*}
	For the first term we substitute $\tilde{x} = \Psi_{r,t,\eta,y}(x):= \gamma_x(r)$ and $\tilde{t} = t+r^2(\eta-t)$, write $p = \abs{\nabla g}_{A(\tilde{t},\tilde{x}) }(\tilde{t},\tilde{x}) \varphi(y)$ and estimate as follows
	\begin{align*}
		I_1 &= \int_B   \int_{0}^{1/2} \int_{r^2\eta}^{\eta+(r^2-1)\iota} \int_{\Psi(B)} \left(  M \abs{\nabla g}_A(\tilde{t},\tilde{x})\varphi(y)-r\abs{\nabla g}^2_A(\tilde{t},\tilde{x}) \varphi^2(y) \right)_+ \\
		&\hphantom{====\int_{0}^{1/2} \int_{r^2\eta}^{\eta+(r^2-1)\iota} \int_{\Psi(B)}} \cdot (1-r)^{-d}(1-r^2)^{-1} \dx \tilde{x} \dx \tilde{t} \dx r \dx y \\
		&\le C \int_B   \int_{0}^{1/2} \int_0^{\eta} \int_{B} \left(  M p-rp^2 \right)_+  \dx \tilde{x} \dx \tilde{t} \dx r \dx y \\
		&= C \int_B \int_0^{\eta} \int_{B}  \int_{0}^{1/2}  \left(  M p-rp^2 \right)_+ \dx r \dx \tilde{x} \dx \tilde{t} \dx y 
	\end{align*}
	as $\Psi(B) \subset B$ for some $C = C(d)$. Considering the inner integral with $m = \max \{1/2,M/p \}$
	\begin{equation*}
		\int_{0}^{1/2}  \left(  M p-rp^2 \right)_+ \dx r  = mMp-\frac{m^2}{2}p^2 \le \frac{M^2}{\sqrt{2}}
	\end{equation*}
	for all $p >0$. This shows the bound $I_1 \le C\frac{M^2}{\sqrt{2}}\eta \abs{B}^2$.
	
	Regarding $I_2$, we use the Cauchy-Schwarz inequality and obtain 
	\begin{align*}
		I_2 \le \int_0^{\eta-\iota}\int_B  \int_B \int_{1/2}^1 \frac{M^2}{r} \dx r \dx y  \dx x \dx t\le \log(2)\eta M^2 \abs{B}^2. 
	\end{align*}
	
	We apply this estimate to the integrals in \eqref{eq:gmc} to conclude
	\begin{align*}
		\int_0^{\eta-\iota}\int_B (g(t,x)-c(u))_+ \dx x \dx t &\le \frac{\abs{B}^2}{c_\varphi} C(d,\delta,\eta, \iota)(\Lambda+\lambda^{-1})  \\
		&\le C(d,\delta,\eta,\iota,\varphi)\mu \abs{B}.
	\end{align*}

	From here, the first estimate follows as 
	\begin{align*}
		s \abs{\{ (t,x) \in K_- \colon \log(u) - c(u)> s \} } &\le  \int_0^{\eta-\iota}\int_B (g(t,x)-c(u))_+ \dx x \dx t \\
		&\le C(d,\delta,\eta,\iota,\varphi)\mu \abs{B}.
	\end{align*}
	Now for the second estimate, we choose again $\gamma(r) = (t+r^2(\eta-t),x+r(y-x))$ and write 
	\begin{align*}
		c(u)-g(t,x) &= \frac{1}{c_\varphi} \int_B (g(\eta,y)-g(t,x))\varphi^2(y) \dx y = \frac{1}{c_\varphi} \int_B \int_0^1 \frac{\dx}{\dx r} g(\gamma(r)) \dx r \varphi^2(y) \dx y \\
		&=  \frac{1}{c_\varphi} \int_0^1\int_B \big( 2(\eta-t)r [\partial_t g](\gamma(t)) +  (y-x) \cdot [\nabla g](\gamma(r)) \big)  \varphi^2(y) \dx y \dx r.
	\end{align*}
	Note that $\eta-t<0$ for $(t,x) \in K_+$, whence we can employ the supersolution property and argue as above. 
	
	If $A$ is not symmetric, then $\abs{\cdot}_A$ does not satisfy the Cauchy-Schwarz inequality in general. Thus we estimate a little bit coarser and end up with $(\Lambda+1/\lambda)^2$ in the end. 
	
	We used formal calculations. The inequality for $g(t,x)-c(u)$ needs to be understood in a weak sense, i.e. the trajectorial argument needs to be performed on the level of test functions. Note that one can also assume that the coefficients and the supersolution itself are $C^\infty$ by an approximation argument due to Aronson, \cite{aronson_uniqueness_1965,moser_pointwise_1971}. 
\end{proof}

\begin{proof}[Alternative proof of Theorem \ref{thm:harnack}]
Again we have to apply the lemma of Bombieri and Giusti twice on suitable cylinders. By adjusting the parameters suitably, we may assume $(-\frac{3}{4}\tau(1-\delta),2\tau) \times B_\beta \subset \Omega_T$ for some $\beta>1$. Choosing the cylinders as depicted in Figure \ref{fig:harnack2}, the proof follows along the lines of the proof given above.  Indeed set $U_\sigma = (2\tau - \frac{3}{4}\tau\sigma,2\tau) \times B_\sigma(0) $ and $ \bar{U}_\sigma = ( -\frac{3}{4}\tau(\sigma-\delta), \frac{3}{4}\tau \sigma) \times B_\sigma(0) $ for $0<\delta \le \sigma\le 1$. Set $K_- = \bar{U}_1$, $Q_- = \bar{U}_\delta$, $K_+ = U_1$ and $Q_+ = U_\delta$. The difference is that the cylinders $\bar{U}_\sigma$ and $U_\sigma$ are separated by a positive gap in time of size $\tau /2$. Now, with a slight abuse of notation, we apply Lemma \ref{lem:weakl1log2} with  $t_0 = -\frac{3}{4}\tau(1-\delta)$, $x_0 = 0$, $\tau = [2\tau+\frac{3}{4}\tau(1-\delta)]\frac{1}{\beta^2} = \frac{3}{4}\tau(\frac{11}{3}-\delta) \frac{1}{\beta^2} $, $\eta = \frac{\frac{7}{3}-\delta}{\frac{11}{3}-\delta}$, $\iota = \frac{1}{11-3\delta}$, $r= \beta$ and $\delta = 1/\beta$ to obtain the weak $L^1$-estimate on $K_-$ and $K_+$. Combined with Lemma \ref{lem:lplinf} and Lemma \ref{lem:bomb} this yields the Harnack inequality.

For the Harnack inequality, a gap between $Q_-$ and $Q_+$ is necessary. Hence this does not change the statement of the Harnack inequality. However, the dependence on $\delta,\tau$ might be worse. \qedhere

\begin{figure}[H]
\centering
\tikzmath{\x = -0.2; \y = 0.2;}
\begin{tikzpicture}[scale=4]
  \draw[thick,->] (0, 0) -- (2.6,0) node[right] {$t$};
  \draw[thick,->] (0, 0) -- (0,0.3) node[left] {$x$};
  \draw[draw=black] (0.0-\x,0.3-\y) rectangle ++(2.3,1.2);
  \draw[draw=black] (0.0-\x,0.4-\y) rectangle ++(2.3,1);
  \draw[draw=black] (1.9-\x,0.7-\y) rectangle ++(0.4,0.4);
  \draw (1.9-\x,1.1-\y) node[anchor=north west] {$Q_+$};
  \draw[draw=black] (0.4-\x,0.7-\y) rectangle ++(0.3,0.4);
  \draw (0.7-\x,1.1-\y) node[anchor=north east] {$Q_-$};
  \draw[draw=black] (0.0-\x,0.4-\y) rectangle ++(1.1,1);
  \draw[draw=black] (1.4-\x,0.4-\y) rectangle ++(0.9,1);
  \draw[draw=black] (0.2-\x,0.5-\y) rectangle ++(0.7,0.8);
  \draw[draw=black] (1.6-\x,0.5-\y) rectangle ++(0.7,0.8);
  \draw (1.1-\x,1.4-\y) node[anchor=north east] {$K_-$};
  \draw (1.4-\x,1.4-\y) node[anchor=north west] {$K_+$}; 
  \draw (0.9-\x,1.3-\y) node[anchor=north east] {$\bar{U}_\sigma$};
  \draw (1.6-\x,1.3-\y) node[anchor=north west] {$U_\sigma$};
  \draw [thick] (0.0-\x, 0) -- ++(0, -.05) ++(0, -.15) node [below, outer sep=0pt, inner sep=0pt] {\small\(-\frac{3}{4}\tau(1-\delta)\)};
  \draw [thick] (0.4-\x, 0) -- ++(0, -.05) ++(0, -.15) node [below, outer sep=0pt, inner sep=0pt] {\vphantom{\small\(-\frac{3}{4}\tau(1-\delta)\)}\small\(0\)};
   \draw [thick] (0.7-\x, 0) -- ++(0, -.05) ++(0, -.15) node [below, outer sep=0pt, inner sep=0pt] {\small\(\frac{3}{4}\tau\delta  \)};
    \draw [thick] (1.1-\x, 0) -- ++(0, -.05) ++(0, -.15) node [below, outer sep=0pt, inner sep=0pt] {\small\(\frac{3}{4}\tau \)\vphantom{\small\(-(1-\delta)(\tau-\iota)\)}};
    \draw [thick] (1.4-\x, 0) -- ++(0, -.05) ++(0, -.15) node [below, outer sep=0pt, inner sep=0pt] {\small\( \frac{5}{4}\tau \)\vphantom{\small\(-(1-\delta)(\tau-\iota)\)}};
      \draw [thick] (1.9-\x, 0) -- ++(0, -.05) ++(0, -.15) node [below, outer sep=0pt, inner sep=0pt] {\small\( 2\tau -\frac{3}{4}\tau\delta  \)};
      \draw [thick] (2.3-\x, 0) -- ++(0, -.05) ++(0, -.15) node [below, outer sep=0pt, inner sep=0pt] {\small\(2 \tau \)\vphantom{\small\(-(1-\delta)(\tau-\iota)\)}};
  \draw [decorate,decoration={brace,amplitude=10pt,mirror,raise=4pt},yshift=0pt]
(2.3-\x,0.7-\y) -- (2.3-\x,1.1-\y) node [black,midway,xshift=0.8cm] {\footnotesize
${B}_{\delta}$};
\draw [decorate,decoration={brace,amplitude=10pt,mirror,raise=4pt},yshift=0pt]
(2.5-\x,0.5-\y) -- (2.5-\x,1.3-\y) node [black,midway,xshift=0.8cm] {\footnotesize
${B}_{\sigma}$};
\draw [decorate,decoration={brace,amplitude=10pt,mirror,raise=4pt},yshift=0pt]
(2.7-\x,0.4-\y) -- (2.7-\x,1.4-\y) node [black,midway,xshift=0.8cm] {\footnotesize
${B}_{1}$};
\draw [decorate,decoration={brace,amplitude=10pt,mirror,raise=4pt},yshift=0pt]
(2.9-\x,0.3-\y) -- (2.9-\x,1.5-\y) node [black,midway,xshift=0.8cm] {\footnotesize
${B}_\beta$};
\end{tikzpicture}
\caption{The cylinders $Q_-,Q_+,\bar{U}_\sigma,U_\sigma,K_-,K_+$ and the spatial balls $B_\delta,B_\sigma,B_1,{B}_\beta$ in the proof of Theorem \ref{thm:harnack} for $(t_0,x_0) = 0$ and $r = 1$. \label{fig:harnack2}}
\end{figure}
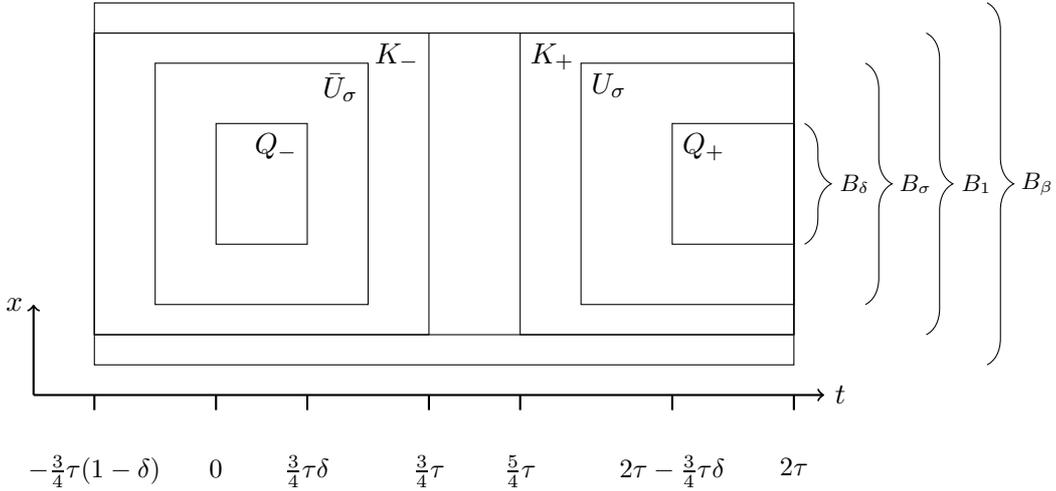
\end{proof}

\section{Elliptic equations}
The trajectorial approach works in the elliptic case, too.
Let $\Omega \subset \R^d$ be an open subset. We consider weak solutions $u \in H^1(\Omega)$ to the equation 
\begin{equation} \label{eq:elliptic}
	-\nabla \cdot (A \nabla u) = 0,
\end{equation}
where $A \in L^\infty(\Omega;\R^{d \times d})$ such that $\lambda \abs{\xi} \le \langle A(x) \xi ,\xi \rangle $ and $\sum_{i,j = 1}^d \abs{A_{ij}(x)}^2 \le \Lambda^2$ for almost all $x \in \Omega$, any $\xi \in \R^d$ and some $0<\lambda<\Lambda $. We write $\mu = \sqrt{\frac{\Lambda}{\lambda}}$ if $A$ is symmetric and $\mu = \frac{\Lambda}{\lambda}$ else.

Using the same method, we can prove the following weak $L^1$-estimate for the logarithm of supersolution. See also \cite{clement_apriori_2004,saloff_aspects_2001} for a proof using the Poincar\'e inequality. 

\begin{lemma}
	Let $\delta \in (0,1)$, $\epsilon>0$ and $u \in H^1(\Omega)$, $u \ge \epsilon>0$ be a weak supersolution to \eqref{eq:elliptic} in $\Omega$. Then, for any ball $B = B_r(x_0) \subset \Omega$ with $r>0$ there exists $C = C(d,\delta)>0$ and $c = c(u)$ such that 
	\begin{equation*}
		\abs{B_{\delta r}(x_0) \cap \{ \log(u) > c+ s \} } \le C \mu \abs{B} s^{-1}
	\end{equation*}
	and
	\begin{equation*}
		\abs{B_{\delta r}(x_0) \cap \{  \log(u) < c-s \} } \le C\mu \abs{B} s^{-1}.
	\end{equation*}
\end{lemma}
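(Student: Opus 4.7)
The plan is to transfer the trajectorial argument of Lemma \ref{lem:weakl1log2} to the elliptic setting. Two simplifications arise compared to the parabolic case: the trajectory reduces to a straight spatial line segment $\gamma(r) = (1-r)x + ry$, and a single two-sided $L^1$-bound for $\log u - c(u)$ will yield both one-sided tail estimates at once via Markov's inequality.

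First I would rescale to $r = 1$ and $x_0 = 0$ and set $g = \log u$, which is a weak supersolution to $-\nabla \cdot (A\nabla g) \ge \langle A\nabla g, \nabla g\rangle$. Fix an intermediate radius $\delta' = (1+\delta)/2$ and a cutoff $\tilde\varphi \in C_c^\infty(B;[0,1])$ with $\tilde\varphi \equiv 1$ on $B_{\delta'}$ and $\norm{\nabla\tilde\varphi}_\infty \le C(\delta)$. Testing the weak supersolution inequality against $\tilde\varphi^2/u$ and applying Young's inequality (with the usual coarsening of the Cauchy-Schwarz step in the non-symmetric case, as in the proof of Lemma \ref{lem:weakl1log2}) produces the Caccioppoli-type estimate
\[
\int_{B_{\delta'}} \abs{\nabla g}^2 \dx x \le C(d,\delta)\mu^2 \abs{B}.
\]

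Next, set $c(u) := \abs{B_{\delta'}}^{-1}\int_{B_{\delta'}} g(y) \dx y$. For $x \in B_\delta$ and $y \in B_{\delta'}$, the segment $\gamma(r) = (1-r)x+ry$ stays inside $B_{\delta'}$, being a convex combination of two points of $B_{\delta'}$. The fundamental theorem of calculus then gives
\[
g(x) - c(u) = -\frac{1}{\abs{B_{\delta'}}}\int_{B_{\delta'}}\int_0^1 (y-x)\cdot \nabla g(\gamma(r)) \dx r \dx y.
\]
Using $\abs{y-x} \le 2$ together with Cauchy-Schwarz in $(y,r)$ and then in $x$, the task reduces to controlling
\[
I := \int_0^1\int_{B_\delta}\int_{B_{\delta'}}\abs{\nabla g(\gamma(r))}^2 \dx y \dx x \dx r.
\]
On $r \in [0,1/2]$ I substitute $z = \gamma(r)$ with $y$ frozen (Jacobian $(1-r)^{-d} \le 2^d$), on $r \in [1/2,1]$ with $x$ frozen (Jacobian $r^{-d} \le 2^d$). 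A convex-combination check shows $z \in B_{\delta'}$ in both cases, so $I \le C(d)\abs{B_{\delta'}}\int_{B_{\delta'}}\abs{\nabla g}^2$. Combining with the Caccioppoli bound yields $\int_{B_\delta}\abs{g(x) - c(u)} \dx x \le C(d,\delta)\mu\abs{B}$, and Markov's inequality then delivers both tail estimates with rate $s^{-1}$ and constant $C\mu\abs{B}$.

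The main obstacle is the change-of-variables step: the Jacobians degenerate at $r = 0$ and $r = 1$, which forces splitting the parameter interval at $1/2$ and choosing the direction of substitution accordingly. One must then verify in each half that the transformed integration region lies inside $B_{\delta'}$, which is exactly why the buffer ball of intermediate radius $\delta' \in (\delta,1)$ is essential. This buffer, together with the line-segment trajectory, is what replaces the Poincar\'e inequality in the classical argument.
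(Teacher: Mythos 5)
Your proof is correct, and it establishes the stated bounds with the right $\mu$-dependence in both the symmetric and non-symmetric cases, but it takes a genuinely different route from the paper's. The paper follows exactly the ansatz of Lemma \ref{lem:weakl1log2}: it writes $g(x)-c(u)$ via the fundamental theorem of calculus along $\gamma(r)=x+r(y-x)$, \emph{then} adds the nonnegative term $-\alpha r\,\nabla\cdot(A\nabla g)-\alpha r\,\langle A\nabla g,\nabla g\rangle$ under the integral, integrates the divergence term by parts against the cutoff, and absorbs $\abs{\nabla g}_A$ \emph{pointwise along the trajectory} by completing the square, optimizing the free parameter $\alpha$ at the end to get $\mu=\sqrt{\Lambda/\lambda}$. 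Your argument instead decouples the two ingredients: first a standard Caccioppoli estimate $\int_{B_{\delta'}}\abs{\nabla g}^2\le C\mu^2\abs{B}$ obtained by testing with $\tilde\varphi^2/u$, then a trajectorial proof of the $L^1$--$L^2$ Poincar\'e-type bound $\int_{B_\delta}\abs{g-c(u)}\le C(\int_{B_{\delta'}}\abs{\nabla g}^2)^{1/2}\abs{B}^{1/2}$ by Cauchy--Schwarz plus the split change of variables at $r=1/2$, concluding via Markov. Your modular version is cleaner to read, yields the same constants, and makes explicit that in the elliptic case the trajectory step is simply an elementary re-derivation of the Poincar\'e inequality for nested balls. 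What the paper's version buys in exchange is structural uniformity with the parabolic argument: the supersolution property is exploited \emph{along} the trajectory rather than in a separate Caccioppoli step, which is exactly the mechanism the authors want to transfer to settings (kinetic, hypoelliptic) where a clean Caccioppoli--then--Poincar\'e separation is unavailable or gives the wrong scaling. Both proofs carry out the same degenerate-Jacobian bookkeeping by switching the change of variable at the midpoint, and both rely on the buffer radius $\delta'$ to keep the transformed domain inside the ball, so that part of your write-up matches the paper exactly.
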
 

\begin{proof}
	Following the same ansatz as in the proof of Lemma \ref{lem:weakl1log2}, we use the trajectory $\gamma(r) = x+r(y-x)$. The supersolution property is used by adding the following nonnegative term 
	\begin{equation*}
		0 \le -\alpha r \ \nabla \cdot (A \nabla g) - \alpha r \ \langle A \nabla g, \nabla g \rangle,
	\end{equation*}
	where $g = \log(u)$ and $\alpha >0 $.  From here, we proceed as above. Assuming that $A$ is symmetric this leads to a factor of the form $\alpha \Lambda + \frac{1}{\alpha \lambda} $ on the right-hand side. Optimising the constant with respect to the choice of $\alpha$, we end up with the factor $\mu = \sqrt{\frac{\Lambda}{\lambda}} $ on the right-hand side. In the nonsymmetric case we obtain an estimate with the factor $\mu = {\frac{\Lambda}{\lambda}} $.
\end{proof}

Together with the elliptic $L^p-L^\infty$ estimate and the Lemma of Bombieri and Giusti, this can be used to prove the Harnack estimate with the optimal dependency of the Harnack constant on $\lambda,\Lambda$. We refer to \cite{clement_apriori_2004,saloff_aspects_2001} for more details and to \cite{mosconi_optimal_2018,moser_pointwise_1971} for a discussion of the optimality.

\section{Further comments}
A trajectorial interpretation of De Giorgi's proof of the elliptic Harnack inequality is due to Vasseur and can be found in \cite{vas_degiorgi_2016}. 

Based on Vasseur's argument, De Giorgi's ideas have been transferred to kinetic equations in \cite{golse_harnack_2016,guerand_quant_2021}. We aim to transfer the trajectorial interpretation of Moser's proof to kinetic equations or even general hypoelliptic equations in the future. We hope to combine an explicit form of the Harnack constant with the framework of \cite{niebel_kinetic_nodate-1} to study the global existence of solutions to nonlinear kinetic equations. The authors employed \textit{kinetic trajectories} in \cite{niebel_poincare_2022} to prove a weak Poincar\'e inequality for weak subsolutions to the Kolmogorov equation. 

The proof of the logarithmic estimates gives some insights into the connection to the proof of the Harnack inequality due to Li and Yau. They prove a differential estimate, which, combined with similar trajectories, gives the Harnack estimate for classical solutions of the heat equation, \cite{li_parabolic_1986}. The Li-Yau estimate is a strong pointwise a priori estimate and is interesting in many different settings, for example, in the context of analysis on graphs, see \cite{dier_discrete_2021} and references therein.  

Moser's approach based on the lemma of Bombieri and Giusti can also be used to prove a weak Harnack inequality with optimal range for the exponent. We refer to \cite{clement_apriori_2004,saloff_aspects_2001,zacher_harnack_2013} for more details. 

\bibliographystyle{amsplain}

\providecommand{\bysame}{\leavevmode\hbox to3em{\hrulefill}\thinspace}
\providecommand{\MR}{\relax\ifhmode\unskip\space\fi MR }

\providecommand{\MRhref}[2]{%
  \href{http://www.ams.org/mathscinet-getitem?mr=#1}{#2}
}
\providecommand{\href}[2]{#2}

$ $\\

\end{document}